\documentclass[fontsize=10pt]{scrreprt}

\usepackage[square,numbers,nonamebreak]{natbib}
\usepackage{setspace}

\usepackage[english]{babel}

\usepackage{chngcntr}

\usepackage[hang]{footmisc}
 at 11truept
\font\ssc=pplrc9d at 11 truept
\newcommand\qedbox{$\rlap{$\sqcap$}\sqcup$}

\usepackage[hmarginratio=1:1, bottom=2cm, top=2.5cm]{geometry}

\usepackage[automark]{scrlayer-scrpage}
\cohead{\textnormal{The Schur--Zassenhaus Theorem for finite skew braces}}
\lehead{\pagemark}
\rohead{\pagemark}

\let\ceheadL\cehead
\renewcommand\cehead[1]{
\ceheadL{\textnormal{#1}}
}

\cehead{M. Ferrara --- M. Trombetti}
\lefoot{}
\rofoot{}

\usepackage{graphicx}
\usepackage[usenames,dvipsnames,svgnames,table]{xcolor}
\definecolor{Maroon}{cmyk}{0, 0.87, 0.68, 0.32}
\definecolor{RoyalBlue2}{cmyk}{80,100,0,0.1}

\newcommand\auths[1]{\large \textsc{\textcolor{Maroon}{#1}}\setstretch{1.2}}
\newcommand\titl[1]{\center \linespread{1.1}\color{RoyalBlue2}\Large\textbf{ #1}\color{black}\bigskip} 
\renewcommand\abstract[1]{
\begin{center}
{\textbf{Abstract}}
\end{center}
{
\linespread{1.1}\fontsize{9pt}{-10pt}\selectfont #1}}

\usepackage[utf8]{inputenc}
\usepackage[sc,osf]{mathpazo}
\usepackage[euler-digits]{eulervm}
\usepackage[T1]{fontenc}
\usepackage{mathtools}

\DeclareSymbolFont{operators}{\encodingdefault}{ppl}{m}{n}
\DeclareMathAlphabet{\mathbf}{\encodingdefault}{ppl}{bx}{n}
\DeclareMathAlphabet{\mathit}{\encodingdefault}{ppl}{m}{it}

\usepackage{amsfonts}
\usepackage{amsmath}
\usepackage{ragged2e}
\usepackage{titlesec}
\usepackage{amssymb}

\renewcommand{\thesection}{\arabic{section}}
 
\titleformat{\section}{\medskip\bigskip\normalfont\Large\bf}{\thesection}{0.5em}{}
\titleformat{\subsection}{\smallskip\bigskip\normalfont\large\bf}{\thesubsection}{0.5em}{}

\usepackage{amsthm}
\newtheoremstyle{dotless}{}{}{\itshape}{}{\bfseries}{}{1em}{}

\theoremstyle{dotless}
\newtheorem{theo}{Theorem}

\newtheorem{lem}[theo]{Lemma}

\newtheorem{rem}[theo]{Remark}
\newtheorem{ex}[theo]{Example}
\renewenvironment{proof}{\smallbreak\noindent {\sc Proof \;---\;}}{\hfill\qedbox}

\counterwithout{theo}{section}
\numberwithin{theo}{section}

\DeclareOldFontCommand{\rm}{\normalfont\rmfamily}{\mathrm}
\DeclareOldFontCommand{\sf}{\normalfont\sffamily}{\mathsf}
\DeclareOldFontCommand{\tt}{\normalfont\ttfamily}{\mathtt}
\DeclareOldFontCommand{\bf}{\normalfont\bfseries}{\mathbf}
\DeclareOldFontCommand{\it}{\normalfont\itshape}{\mathit}
\DeclareOldFontCommand{\sl}{\normalfont\slshape}{\@nomath\sl}
\DeclareOldFontCommand{\sc}{\normalfont\scshape}{\@nomath\sc}

\begin{document}

\setheadsepline{1pt}[\color{black}]

\titl{The Schur--Zassenhaus Theorem and Sylow's Third Theorem for Finite Skew Braces
\footnote{The authors are supported by GNSAGA (INdAM) and are members of the non-profit association ``Advances in Group Theory and Applications'' (www.advgrouptheory.com).}}

\auths{M. Ferrara -- M. Trombetti}

\thispagestyle{empty}
\justify\noindent
\setstretch{0.3}
\abstract{In this short note we establish the Schur--Zassenhaus Theorem
and Sylow's Third Theorem for finite skew braces. More precisely, we
prove that every Hall ideal of a finite skew brace admits a sub-skew
brace complement, and more generally that every left ideal whose order is
coprime to that of the Hall ideal can be embedded in such a complement.
Using similar ideas we show that every left ideal of prime-power order is contained in
a Sylow sub-skew brace. Finally, we prove that the number of Sylow
$p$-sub-skew braces is congruent to $1$ modulo $p$, and provide examples
showing that the corresponding containment property fails for arbitrary
sub-skew braces.}

\setstretch{2.1}
\noindent
{\fontsize{10pt}{-10pt}\selectfont {\it Mathematics Subject Classification \textnormal(2020\textnormal)}: Primary 20N99; Secondary 16T25, 20D20}\\[-0.8cm]

\noindent 
\fontsize{10pt}{-10pt}\selectfont  {\it Keywords}: skew brace; Schur--Zassenhaus Theorem; Sylow Theorems\\[-0.8cm]

\setstretch{1.1}
\fontsize{11pt}{12pt}\selectfont
\section{Introduction}

In its classical form, the Schur--Zassenhaus Theorem asserts that if
\(G\) is a finite group and \(N\trianglelefteq G\) is such that
\((|N|,|G/N|)=1\), then \(G\) has a subgroup of order \(|G/N|\), and
any two such subgroups are conjugate in \(G\). Its importance in finite
group theory is well established; as Robinson writes in \cite{Rob95},
it ``must be reckoned as one of the truly fundamental results of group
theory''.

Some partial extensions of the Schur--Zassenhaus Theorem to the setting
of skew braces have been obtained in \cite{RatheeYadav26}. In this note,
we prove the existence part of the Schur--Zassenhaus Theorem for arbitrary
finite skew braces (see Theorem \ref{schurzassenhaus}). Our proof is based on an application of Hall's theorem to a suitable
semidirect product naturally associated with the lambda action of \(B\).
In fact, the same argument proves more: if \(I\) is a Hall ideal of \(B\),
then every left ideal whose order is coprime to \(|I|\) is contained in a
complement of \(I\).

Using similar ideas, we also obtain a containment result in the Sylow
setting: every left ideal of prime-power order is contained in a Sylow
sub-skew brace (see Theorems~\ref{sylowtheorem} and \ref{sylowtheorem2}). A couple of examples show that limit of the theory here (see~Examples~\ref{ex1} and \ref{ex2}). Nevertheless, our second main result shows that there is an analogue of the Sylow's Third Theorem for finite skew braces (see Theorem \ref{thirdsylow}).

\section{Proof of the main result}

We use the following standard notation. If \(B=(B,+,\circ)\) is a skew
brace, then \(\lambda_a(b)=-a+a\circ b\). A {\it left ideal} is an additive
subgroup invariant under all maps \(\lambda_a\). An {\it ideal} is a left ideal
which is normal in both \((B,+)\) and \((B,\circ)\). A {\it sub-skew brace} is
a subset which is a subgroup with respect to both operations.

\smallskip

The following consequence of the Schur--Zassenhaus theorem is well-known to group-theorists but for the sake of
completeness we nevertheless include a proof.

\begin{lem}\label{lem1}
Let $X$ be a finite group and let $N\trianglelefteq X$ be a normal Hall
$\pi'$-subgroup. Then Hall $\pi$-subgroups of $X$ exist, any two Hall
$\pi$-subgroups of $X$ are conjugate, and every $\pi$-subgroup of $X$ is
contained in a Hall $\pi$-subgroup of $X$.
\end{lem}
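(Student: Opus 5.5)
We derive all three assertions from the classical Schur--Zassenhaus Theorem (existence and conjugacy of complements). Since \(N\) is a normal Hall \(\pi'\)-subgroup, we have \((|N|,|X/N|)=1\), and \(|X/N|\) is a \(\pi\)-number.

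For \textbf{existence}, Schur--Zassenhaus gives a complement \(H\) of \(N\) in \(X\). Then \(|H|=|X/N|\) is the full \(\pi\)-part of \(|X|\), so \(H\) is a Hall \(\pi\)-subgroup.

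For \textbf{conjugacy}, take any Hall \(\pi\)-subgroup \(K\). Since \(|K|\) and \(|N|\) are coprime, \(K\cap N=1\). So \(|KN|=|K||N|=|X|\), and \(K\) is a complement of \(N\). The conjugacy part of Schur--Zassenhaus then shows that any two Hall \(\pi\)-subgroups are conjugate. This is the one place where a non-elementary input enters: when neither \(N\) nor \(X/N\) is solvable, conjugacy relies on Feit--Thompson (one of the two has odd order). We simply cite Schur--Zassenhaus in its full classical form, as the Introduction does.

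For \textbf{containment}, which is the main step, let \(P\) be a \(\pi\)-subgroup of \(X\). Again \(P\cap N=1\), and we set \(Y=PN\), a subgroup of \(X\) in which \(N\) is normal. In \(Y\), \(N\) is a normal Hall \(\pi'\)-subgroup and \(P\) is a complement to it. Now take any Hall \(\pi\)-subgroup \(H\) of \(X\), which exists by the first part. By Dedekind's law, \(Y=Y\cap HN=(Y\cap H)N\). Since \(Y\cap H\) is a \(\pi\)-group, it meets \(N\) trivially, so \(Y\cap H\) is also a complement to \(N\) in \(Y\). Applying the conjugacy part of Schur--Zassenhaus inside \(Y\), we get \(P=(Y\cap H)^y\) for some \(y\in Y\). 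Hence \(P\le H^y\), and \(H^y\) is a Hall \(\pi\)-subgroup of \(X\).

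The only subtlety is making sure conjugacy is applied in the subgroup \(Y\) rather than in \(X\), where \(P\) need not be a complement. With that observed, the argument is routine.
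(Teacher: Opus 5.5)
Your proposal is correct and follows the paper's argument essentially verbatim: existence and conjugacy come straight from Schur--Zassenhaus, and containment is obtained by passing to $Y=PN$, using Dedekind's law to see that $Y\cap H$ is another complement of $N$ in $Y$, and applying conjugacy of complements inside $Y$. One small wording point: Feit--Thompson is what guarantees that $N$ or $X/N$ is soluble, so the case where neither is soluble never actually occurs; it is not a case that Feit--Thompson handles.
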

\begin{proof}
Most of the statement is the classical Schur--Zassenhaus theorem. It remains
to prove the containment statement. Let $P\leq X$ be a $\pi$-subgroup and
put $L=NP$. Then $N\trianglelefteq L$, the subgroup $N$ is a normal Hall
$\pi'$-subgroup of $L$, and $P$ is a complement of $N$ in $L$. Let $Q$ be
a Hall $\pi$-subgroup of $X$. Since $X=NQ$ and $N\leq L$, Dedekind's
modular law gives $L=L\cap X=L\cap NQ=N(L\cap Q)$. Moreover,
$N\cap(L\cap Q)\leq N\cap Q=1$, so $L\cap Q$ is also a complement of
$N$ in $L$. By the conjugacy part of Schur--Zassenhaus applied inside
$L$, there exists $\ell\in L$ such that $P=(L\cap Q)^\ell$. Therefore
$P\leq Q^\ell$, and $Q^\ell$ is a Hall $\pi$-subgroup of $X$.
\end{proof}

\begin{theo}\label{schurzassenhaus}
Let $B=(B,+,\circ)$ be a finite skew brace, and let $I$ be an ideal of
$B$ such that $\gcd(|I|,|B/I|)=1$. Put $\pi=\pi(|B/I|)$. If $S$ is a
left ideal of $B$ whose order is a $\pi$-number, then there exists a
sub-skew brace $H\leq B$ such that $S\leq H$, $B=I+H=I\circ H$ and
$I\cap H=\{0\}$.
\end{theo}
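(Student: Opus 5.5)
The plan is to turn the problem into one about an ordinary finite group and then apply Lemma~\ref{lem1}. Let $A=(B,+)$ and let $X=A\rtimes(B,\circ)$ be the semidirect product via the lambda action, with multiplication
\[
(a,b)(c,d)=(a+\lambda_b(c),\,b\circ d).
\]
Because $\lambda_{a}\lambda_{b}=\lambda_{a\circ b}$, this is a group. Two facts are worth recording at the start. First, the diagonal $D=\{(b,b):b\in B\}$ is a subgroup isomorphic to $(B,\circ)$, since $(a,a)(c,c)=(a\circ c,a\circ c)$. Second, if $H$ is a sub-skew brace, then $H\rtimes H$ is a subgroup of $X$ containing $\{(h,h):h\in H\}$.

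\textbf{Step 1 (the normal Hall subgroup).} Put $N=I\rtimes I$. I will check that $N\trianglelefteq X$ by conjugating the two kinds of generators.
\begin{itemize}
\item Conjugating $(x,0)$ by $(0,b)$ gives $(\lambda_b(x),0)$, which lies in $N$ because $I$ is a left ideal.
\item Conjugating $(0,y)$, with $y\in I$, by $(a,0)$ gives $(a-\lambda_y(a),y)$. Here $a-\lambda_y(a)=a-y\circ a+y$, and this lies in $I$ because $y\circ a\equiv a$ modulo $I$ in the quotient brace $B/I$.
\end{itemize}
Normality in $A$ and in $(B,\circ)$ handles the remaining conjugations. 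Then $|N|=|I|^2$ and $|X:N|=|B/I|^2$. So $N$ is a normal Hall $\pi'$-subgroup, and $X/N\cong (B/I,+)\rtimes(B/I,\circ)$.

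\textbf{Step 2 (embedding $S$).} A left ideal $S$ is a sub-skew brace, since $s\circ t=s+\lambda_s(t)$. Hence $P=S\rtimes S$ is a $\pi$-subgroup of $X$. By Lemma~\ref{lem1} there is a Hall $\pi$-subgroup $Q$ of $X$ with $P\le Q$, and $Q$ maps isomorphically onto $X/N$.

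\textbf{Step 3 (extracting $H$).} This is the step I expect to be the main obstacle. The candidates for $H$ come from $Q$:
\begin{itemize}
\item $H_+=\{a:(a,0)\in Q\}$, the $A$-part. Since $A\trianglelefteq X$, the group $Q\cap A$ is a Hall $\pi$-subgroup of $A$. So $H_+$ has order $|B/I|$ and is an additive complement to $I$ containing $S$.
\item $T$, the projection of $Q$ to $(B,\circ)$. This is a Hall $\pi$-subgroup of $(B,\circ)$ containing $S$, so it is a multiplicative complement to $I$.
\item $O=Q\cdot 0=\{a:\exists t,\ (a,t)\in Q\}$, the orbit of $0$ under the affine action $x\mapsto a+\lambda_t(x)$.
\end{itemize}
What must be shown is that a suitable choice of $Q$, or a conjugate of $Q$ still containing $P$, makes these sets coincide. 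Then $H:=H_+=T$ is closed under $+$ and $\circ$, with $|H|=|B/I|$ and $H\cap I=\{0\}$, and so $B=I+H=I\circ H$ by order counting.

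My first attempt will be to compare $Q$ with the diagonal $D$. I will apply the containment and conjugacy part of Lemma~\ref{lem1} inside $L=N(Q\cap ND)$, aiming to conjugate by an element of $L$ fixing $P$ so that $Q\cap D$ has order $|B/I|$. The set $\{b:(b,b)\in Q\}$ is automatically a subgroup of $(B,\circ)$. The remaining check is that it equals $H_+$, using the relation $(a,s)(c,t)=(a+\lambda_s(c),s\circ t)$ together with the fact that $Q$ normalizes $Q\cap A$.

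If this diagonal argument fails, my fallback is to work with the affine action directly. I will try to show that $O$ is an additive subgroup of order $|B/I|$ that is invariant under $\lambda_t$ for $t\in T$, and then derive $T=O$ from $(0,t)\cdot 0=0$ and a counting argument.
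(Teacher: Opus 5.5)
\noindent Your Steps 1 and 2 are fine. The normality check for $N=I\rtimes I$ is correct, and Lemma~\ref{lem1} does give a Hall $\pi$-subgroup $Q$ of $X$ containing $S\rtimes S$. The gap is Step~3, which carries the whole content of the theorem and which you leave as a plan. Worse, the criterion your first attempt relies on is false: knowing that $|Q\cap D|=|B/I|$ and that $Q$ normalizes $Q\cap A$ does \emph{not} force $\{b:(b,b)\in Q\}=H_+$.

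Here is a counterexample. Take the almost trivial brace on $S_3$, so $a\circ b=b+a$ and $\lambda_b$ is conjugation. Let $I=A_3$, $\pi=\{2\}$, $S=\{0\}$, $h=(1\,2)$ and $t=(1\,3)$. In $X$ one has $(t,t)(h,0)(t,t)^{-1}=(t\circ h-t,0)=(h,0)$. Hence $Q=\langle (h,0),(t,t)\rangle\cong C_2\times C_2$ is a Sylow $2$-subgroup of $X$ (as $|X|=36$), trivially containing $S\rtimes S$. Its intersection $Q\cap D=\{(0,0),(t,t)\}$ has order $2=|B/I|$. Yet $H_+=\langle h\rangle$, $T=\langle t\rangle$ and $O=\{0,h,t,h+t\}$ are pairwise different, and $|O|=4$, so your fallback also fails for this $Q$. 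The underlying reason is that $S\rtimes S\le Q$ only yields $a+\lambda_t(H_+)-a=H_+$ for $(a,t)\in Q$, not $\lambda_t(H_+)=H_+$. You also give no argument that one can conjugate inside $L$ while keeping $S\rtimes S$.

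\noindent The missing idea is to stop looking for a single Hall subgroup in which the three sets coincide, and instead to twist.
\begin{enumerate}
\item First choose a Hall $\pi$-subgroup $T$ of $(B,\circ)$ with $S\le T$.
\item Apply Lemma~\ref{lem1} to $S\rtimes T$ rather than $S\rtimes S$. The resulting Hall $\pi$-subgroup $R$ contains $\{0\}\times T$, so $R=P\rtimes T$, where $P=R\cap A$ is an additive complement of $I$ containing $S$ and invariant under $\lambda(T)$.
\item The diagonal $D_T=\{(t,t):t\in T\}$ is a $\pi$-subgroup, so it lies in some conjugate $(g,h)R(g,h)^{-1}$.
\item Evaluating the affine action at $0$ gives $T\circ g\subseteq g+\lambda_h(P)$, with equality by counting.
\item Therefore $H=g^{-1,\circ}\circ T\circ g=\lambda_{g^{-1,\circ}\circ h}(P)$ is a subgroup of both $(B,\circ)$ and $(B,+)$. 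It contains $S$ because $S\le P$ and $S$ is $\lambda$-invariant.
\end{enumerate}
This is the paper's argument. There it is carried out in $G\rtimes_\lambda Q$ and $G\rtimes\lambda(Q)$ rather than in your larger $X$, with the paper's $Q$ playing the role of your $T$. The ``good'' Hall subgroup you were after is then $H\rtimes H$. But it comes out of this conjugation-and-evaluation step; it is not read off from an arbitrary $Q\ge S\rtimes S$.
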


\begin{proof}
Put $G=(B,+)$. Since $S$ is a left ideal, $S$ is a subgroup of $G$ and
$\lambda_b(S)=S$ for every $b\in B$. In particular, $S$ is a
sub-skew brace of $B$, because $s\circ t=s+\lambda_s(t)\in S$ for all
$s,t\in S$.

We first choose a multiplicative complement of $I$ containing $S$. Since
$I$ is an ideal, $I\trianglelefteq (B,\circ)$; moreover $I$ is a normal
Hall $\pi'$-subgroup of $(B,\circ)$. By Lemma~\ref{lem1}, applied in
the group $(B,\circ)$, there exists a Hall $\pi$-subgroup
$Q\leq (B,\circ)$ such that $S\leq Q$. Thus $Q$ is a complement of $I$
in $(B,\circ)$.

We now choose an additive complement of $I$ containing $S$ and stable
under $\lambda(Q)$. Consider the semidirect product
$G\rtimes_\lambda Q$, where $Q$ acts on $G$ by the lambda maps. Since
$I$ is an ideal, $(I,+)$ is normal in $G$ and $\lambda_q(I)=I$ for every
$q\in Q$. Hence $I$ is a normal Hall $\pi'$-subgroup of
$G\rtimes_\lambda Q$. The subgroup $S\rtimes Q$ is a $\pi$-subgroup of
$G\rtimes_\lambda Q$, because $S$ is $\lambda(Q)$-invariant and both
$S$ and $Q$ are $\pi$-groups. Applying Lemma~\ref{lem1} to
$G\rtimes_\lambda Q$, there exists a Hall $\pi$-subgroup
$R$ of $G\rtimes_\lambda Q$ such that $S\rtimes Q\leq R$. 
Since \(R\) contains \(\{0\}\rtimes Q\), the projection
\(R\to Q\) is onto. Hence
\[
        |R\cap G|=|R|/|Q|=|B/I|.
\]
Thus \(P=R\cap G\) is a Hall \(\pi\)-subgroup of \(G=(B,+)\). Moreover $S\leq P$, and $P$ is
$\lambda(Q)$-invariant, because $Q\leq R$ normalizes $R\cap G$. Thus
$P$ is a complement of $I$ in $(B,+)$, with $S\leq P$ and
$\lambda_q(P)=P$ for every $q\in Q$.

We now pass to the holomorph of $G$. Let
$A=G\rtimes \lambda(Q)$. Since $P$ is $\lambda(Q)$-invariant,
$P\rtimes\lambda(Q)$ is a subgroup of $A$. Moreover it is a Hall
$\pi$-subgroup of $A$, because $P$ is a Hall $\pi$-subgroup of $G$ and
$\lambda(Q)$ is a $\pi$-group. The map
$a\mapsto (a,\lambda_a)$ embeds $(B,\circ)$ in $\operatorname{Hol}(G)$;
write $\ell_\circ(a)=(a,\lambda_a)$. Then $\ell_\circ(Q)$ is a
$\pi$-subgroup of $A$.

Since $I$ is $\lambda(Q)$-invariant, $I$ is a normal Hall
$\pi'$-subgroup of $A=G\rtimes\lambda(Q)$. Therefore, by
Lemma~\ref{lem1}, the $\pi$-subgroup $\ell_\circ(Q)$ is contained in a
Hall $\pi$-subgroup of $A$. Since all Hall $\pi$-subgroups of $A$ are
conjugate, there exist $g\in G$ and $h\in Q$ such that
\[
        \ell_\circ(Q)\leq
        (g,\lambda_h)(P\rtimes\lambda(Q))(g,\lambda_h)^{-1}.
\]
Equivalently,
$\ell_\circ(Q)(g,\lambda_h)\subseteq
(g,\lambda_h)(P\rtimes\lambda(Q))$. Evaluating both sides at the
identity element of $G$, we obtain $Q\circ g\subseteq g+\lambda_h(P)$.
Both sets have order $|Q|=|P|=|B/I|$, and hence
$Q\circ g=g+\lambda_h(P)$.

Set $H=g^{-1,\circ}\circ Q\circ g$. Then $H$ is a subgroup of
$(B,\circ)$ and $|H|=|Q|=|B/I|$. We claim that $H$ is also a subgroup
of $(B,+)$. Indeed, from $Q\circ g=g+\lambda_h(P)$ we get
$-g+Q\circ g=\lambda_h(P)$. On the other hand, for every $q\in Q$,
\[
        -g+q\circ g
        =
        \lambda_g(g^{-1,\circ}\circ q\circ g).
\]
Thus $\lambda_g(H)=\lambda_h(P)$, and therefore
$H=\lambda_g^{-1}\lambda_h(P)=\lambda_{g^{-1,\circ}\circ h}(P)$.
Since $P$ is a subgroup of $(B,+)$ and $\lambda_{g^{-1,\circ}\circ h}$
is an automorphism of $(B,+)$, it follows that $H$ is a subgroup of~$(B,+)$.

Therefore $H$ is a subgroup of both $(B,+)$ and $(B,\circ)$, and hence
$H$ is a sub-skew brace of $B$. Moreover $S\leq P$ and $S$ is a left
ideal, so
$S=\lambda_{g^{-1,\circ}\circ h}(S)\leq
\lambda_{g^{-1,\circ}\circ h}(P)=H$. Thus $S\leq H$.

Finally, $|H|=|B/I|$. Since $|I|$ and $|H|$ are coprime, we have
$I\cap H=\{0\}$. Since $I$ is an ideal, it is normal in both
$(B,+)$ and $(B,\circ)$. Hence $I+H$ and $I\circ H$ are subgroups, and
$|I+H|=|I\circ H|=|I||H|=|B|$. Therefore
$B=I+H=I\circ H$, as required.
\end{proof}

\begin{rem}
{\rm The existence statement in Theorem~\ref{schurzassenhaus} can also
be proved by adapting the trifactorised-group approach to finite skew
braces, as in \cite{BallesterPerezPerez26}. This alternative proof,
however, only gives the existence of a skew brace complement of a Hall
ideal. It does not seem to provide the additional containment property
for prescribed left ideals.}
\end{rem}

\medskip

The following example shows that the full analog of Lemma \ref{lem1} does not hold for skew braces.

\begin{ex}\label{ex1}
{\rm Let $N=\mathbb F_3$, written additively, and let
$Q=\langle r,t\mid r^2=t^2=1,\ rt=tr\rangle\simeq C_2\times C_2$.
Define two actions $\alpha,\tau\colon Q\to \operatorname{Aut}(N)$ by
$\alpha_r=-1$, $\alpha_t=1$, $\tau_r=1$, and $\tau_t=-1$. Put
$\beta_q=\alpha_q\tau_q$ for every $q\in Q$.

Let $B=N\times Q$. Define two operations on $B$ by
$(a,q)+(b,u)=(a+\alpha_q(b),qu)$ and
$(a,q)\circ(b,u)=(a+\beta_q(b),qu)$. Then $(B,+)$ and $(B,\circ)$ are
the semidirect products $N\rtimes_\alpha Q$ and $N\rtimes_\beta Q$,
respectively. Moreover, for $x=(a,q)$ and $y=(b,u)$ one has
\[
        \lambda_x(y)=-_+x+x\circ y=(\tau_q(b),u).
\]
Since $\tau_q$ commutes with every $\alpha_u$, the map
$y\mapsto \lambda_x(y)$ is an automorphism of $(B,+)$; and since
$\tau$ is a homomorphism, $x\mapsto \lambda_x$ is a homomorphism from
$(B,\circ)$ to $\operatorname{Aut}(B,+)$. Also
$x\circ y=x+\lambda_x(y)$. Hence $B=(B,+,\circ)$ is a skew brace.

Let $I=N\times\{1\}$. Then $I$ is an ideal of $B$: it is normal in both
$(B,+)$ and $(B,\circ)$, and it is invariant under all $\lambda_x$.
Moreover $|I|=3$ and $|B/I|=4$, so $I$ is a Hall ideal. Put
$\pi=\{2\}$.

Now let $S=\{(0,1),(1,r)\}$. Since $\alpha_r=\beta_r=-1$, we have
$(1,r)+(1,r)=(0,1)$ and $(1,r)\circ(1,r)=(0,1)$. Hence $S$ is a
sub-skew brace of order $2$. Also $S\cap I=\{(0,1)\}$.

We claim that $S$ is not contained in any skew brace complement of $I$.
Let $H$ be a skew brace complement of $I$ in $B$. Since $H\cap I=1$ and
$|H|=|B/I|=4$, the projection $B\to B/I\simeq Q$ restricts to a
bijection $H\to Q$. Thus there is a function $F\colon Q\to N$, with
$F(1)=0$, such that $H=\{(F(q),q):q\in Q\}$.

Since $H$ is closed under $+$, we have
$F(qu)=F(q)+\alpha_q(F(u))$ for all $q,u\in Q$. Since $H$ is closed
under $\circ$, we also have $F(qu)=F(q)+\beta_q(F(u))$ for all
$q,u\in Q$. Therefore $\alpha_q(F(u))=\beta_q(F(u))$ for all $q,u$.
Since $\beta_q=\alpha_q\tau_q$, this gives $F(u)=\tau_q(F(u))$ for all
$q,u\in Q$.

Taking $q=t$, we get $F(u)=\tau_t(F(u))=-F(u)$ for every $u\in Q$.
Since $N=\mathbb F_3$, this forces $F(u)=0$ for every $u\in Q$.
Therefore the only skew brace complement of $I$ is
$H_0=\{0\}\times Q$.

But $(1,r)\in S$ and $(1,r)\notin H_0$. Hence $S$ is not contained in
any skew brace complement of $I$.}
\end{ex}

Example~\ref{ex1} shows that, unlike in group theory, Sylow sub-skew
braces do not contain all sub-skew braces of prime-power order. Hence
the recent Sylow existence theorems for finite skew braces
\cite{BallesterPerezPerez26,Truman26} cannot be strengthened in this
direction without further assumptions. We prove below that the expected
containment property is recovered for left ideals: every left ideal of
prime-power order is contained in a Sylow sub-skew brace. This refines
the Sylow existence results in this special case and is in line with the
containment phenomena appearing in
\cite{CarantiDelCorsoDiMatteoFerraraTrombetti25,ErcanGulGulogluKizmaz26}.
For completeness, we give the proof.

\begin{theo}\label{sylowtheorem}
Let $B=(B,+,\circ)$ be a finite skew brace, let $p$ be a prime, and let
$S$ be a left ideal of $B$ whose order is a power of $p$. Then there
exists a Sylow $p$-sub-skew brace $H$ of $B$ such that $S\leq H$.
\end{theo}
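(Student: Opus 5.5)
We imitate the proof of Theorem~\ref{schurzassenhaus}, replacing the use of Lemma~\ref{lem1} by the classical Sylow theorems, which give containment and conjugacy for $p$-subgroups in any finite group. Put $G=(B,+)$ and let $p^n$ be the $p$-part of $|B|$. As in the proof of Theorem~\ref{schurzassenhaus}, $S$ is a sub-skew brace because $s\circ t=s+\lambda_s(t)$. Since $(S,\circ)$ is a $p$-subgroup of $(B,\circ)$, Sylow's theorem gives a Sylow $p$-subgroup $Q$ of $(B,\circ)$ with $S\leq Q$.

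Next we form the semidirect product $G\rtimes_\lambda Q$. The subgroup $S\rtimes Q$ is a $p$-subgroup, because $S$ is $\lambda$-invariant. By Sylow's theorem it lies in a Sylow $p$-subgroup $R$ of $G\rtimes_\lambda Q$. Since $R\supseteq \{0\}\rtimes Q$, the projection $R\to Q$ is onto. Hence $|R\cap G|=|R|/|Q|=p^n$, and $P=R\cap G$ is a Sylow $p$-subgroup of $G$. Moreover $S\leq P$, and $P$ is $\lambda(Q)$-invariant because $Q$ normalizes $R\cap G$.

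Now work in $A=G\rtimes\lambda(Q)\leq \operatorname{Hol}(G)$. Here $\lambda(Q)$ is a $p$-group, being a homomorphic image of $Q$, so $P\rtimes\lambda(Q)$ is a Sylow $p$-subgroup of $A$. The group $\ell_\circ(Q)=\{(q,\lambda_q)\}$ is a $p$-subgroup of $A$. By Sylow's theorem, $\ell_\circ(Q)$ is contained in some conjugate
\[
(g,\lambda_h)(P\rtimes\lambda(Q))(g,\lambda_h)^{-1}
\]
with $g\in G$ and $h\in Q$. We then repeat verbatim the evaluation-at-$0$ argument of Theorem~\ref{schurzassenhaus}. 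It gives $Q\circ g\subseteq g+\lambda_h(P)$, and equality holds because $|Q|=|P|=p^n$. Setting $H=g^{-1,\circ}\circ Q\circ g$, we get $H=\lambda_{g^{-1,\circ}\circ h}(P)$. So $H$ is a subgroup of both $(B,\circ)$ and $(B,+)$, of order $p^n$, and hence a Sylow $p$-sub-skew brace. Finally, $S$ is a left ideal with $S\leq P$, so $S=\lambda_{g^{-1,\circ}\circ h}(S)\leq H$.

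The main obstacle is the holomorph step: we must make sure $P\rtimes\lambda(Q)$ really is Sylow in $A$. This holds because $|A|=|G|\,|\lambda(Q)|$, where $|\lambda(Q)|$ is a power of $p$, and $|P|$ is the full $p$-part of $|G|$. The rest is the identity $-g+q\circ g=\lambda_g(g^{-1,\circ}\circ q\circ g)$, which was already verified in the proof of Theorem~\ref{schurzassenhaus}.
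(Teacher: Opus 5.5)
Your proof is correct, and its architecture matches the paper's. You pick a Sylow $p$-subgroup $Q$ of $(B,\circ)$ containing $S$ and find a $\lambda(Q)$-invariant Sylow $p$-subgroup $P$ of $G=(B,+)$ containing $S$. Then Sylow conjugacy in the holomorph and evaluation at $0$ give $H=\lambda_{g^{-1,\circ}\circ h}(P)$.

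The real difference is how you produce $P$. The paper counts fixed points. The Sylow $p$-subgroups of $G$ containing $S$ are exactly the fixed points of $S$ acting by conjugation on $\operatorname{Syl}_p(G)$, so their number is $\equiv 1\pmod p$. The $p$-group $Q$, acting on them through $\lambda$, must therefore fix one. You instead reuse the semidirect-product step of Theorem~\ref{schurzassenhaus}, taking $P=R\cap G$ for a Sylow subgroup $R$ of $G\rtimes_\lambda Q$ containing $S\rtimes Q$. Your checks are all sound: $|R|=p^{2n}$, $R\to Q$ is onto, $|R\cap G|=p^n$, and $R\cap G$ is invariant under conjugation by $\{0\}\rtimes Q$.

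As for what each route buys: the paper's counting is specific to a single prime and anticipates the argument of Theorem~\ref{thirdsylow}. Yours uses only the containment and conjugacy properties of Sylow subgroups. It is therefore one template that also covers Theorem~\ref{schurzassenhaus} and Theorem~\ref{sylowtheorem2}, and the paper in fact proves the latter essentially your way.

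A minor further difference: you conjugate inside $G\rtimes\lambda(Q)$ rather than $G\rtimes\lambda(B)$. This spares you checking that $\lambda(Q)$ is Sylow in $\lambda(B)$ and gives $h\in Q$; nothing else changes.
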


\begin{proof}
We use the holomorph of the additive group. Put $G=(B,+)$, and identify
$\operatorname{Hol}(G)$ with $G\rtimes\operatorname{Aut}(G)$. Thus
$(x,\alpha)(y,\beta)=(x+\alpha(y),\alpha\beta)$, and
$(x,\alpha)$ acts on $G$ by $(x,\alpha)[y]=x+\alpha(y)$.

For every $a\in B$, let $\lambda_a\in\operatorname{Aut}(G)$ be the
lambda map, $\lambda_a(b)=-a+a\circ b$. Then
$a\circ b=a+\lambda_a(b)$, and the map
$a\mapsto (a,\lambda_a)$ embeds $(B,\circ)$ into $\operatorname{Hol}(G)$.
We denote this embedding by $\ell_\circ$, so that
$\ell_\circ(a)=(a,\lambda_a)$.

Since $S$ is a left ideal, $S$ is a subgroup of $(B,+)$ and
$\lambda_b(S)=S$ for every $b\in B$. In particular, $S$ is also a
subgroup of $(B,\circ)$, because if $s,t\in S$, then
$s\circ t=s+\lambda_s(t)\in S$.

Choose a Sylow $p$-subgroup $Q$ of $(B,\circ)$ such that $S\leq Q$.
We now choose a suitable Sylow $p$-subgroup of $(B,+)$. Let $\Omega$ be
the set of all Sylow $p$-subgroups of $(B,+)$ which contain $S$.

First, $\Omega$ is non-empty by the ordinary Sylow theorem. Moreover
$|\Omega|\equiv 1\pmod p$. Indeed, let $\operatorname{Syl}_p(G)$ be the
set of all Sylow $p$-subgroups of $G=(B,+)$, and let $S$ act on
$\operatorname{Syl}_p(G)$ by conjugation. The fixed points of this
action are precisely the Sylow $p$-subgroups of $G$ which contain $S$:
if a Sylow $p$-subgroup $P$ contains $S$, then $S$ normalizes $P$; and
if $S$ normalizes $P$, then $SP$ is a $p$-subgroup of $G$, hence
$SP=P$ and so $S\leq P$. Since
$|\operatorname{Syl}_p(G)|\equiv 1\pmod p$ and all non-trivial orbits
of the $p$-group $S$ have length divisible by $p$, it follows that
$|\Omega|\equiv 1\pmod p$.

The group $Q$ acts on $\Omega$ via the lambda action. Indeed, if
$P\in\Omega$ and $q\in Q$, then $\lambda_q(P)$ is again a Sylow
$p$-subgroup of $(B,+)$; moreover, since $S$ is a left ideal, we have
$\lambda_q(S)=S$, and hence $S\leq\lambda_q(P)$. Thus
$\lambda_q(P)\in\Omega$.

Since $Q$ is a $p$-group and $|\Omega|\equiv 1\pmod p$, the action of
$Q$ on $\Omega$ has a fixed point. Hence there exists a Sylow
$p$-subgroup $P$ of $(B,+)$ such that $S\leq P$ and
$\lambda_q(P)=P$ for every $q\in Q$.

Now consider the subgroup $P\rtimes\lambda(Q)$ of
$G\rtimes\lambda(B)$. It is a Sylow $p$-subgroup of
$G\rtimes\lambda(B)$: indeed, $P$ is Sylow in $G$, and $\lambda(Q)$ is
Sylow in $\lambda(B)$, since $\lambda\colon (B,\circ)\to\lambda(B)$ is
an epimorphism and $Q$ is Sylow in $(B,\circ)$.


The subgroup $\ell_\circ(Q)$ is a $p$-subgroup of $G\rtimes\lambda(B)$.
By the ordinary Sylow theorem, $\ell_\circ(Q)$ is contained in a
conjugate of $P\rtimes\lambda(Q)$. Thus there exist $g,h\in B$ such
that
$\ell_\circ(Q)\leq (g,\lambda_h)(P\rtimes\lambda(Q))(g,\lambda_h)^{-1}$.
Equivalently,
$\ell_\circ(Q)(g,\lambda_h)\leq (g,\lambda_h)(P\rtimes\lambda(Q))$.

Evaluating both sides at the identity element of $(B,+)$ gives
$Q\circ g\subseteq g+\lambda_h(P)$. Since both sets have size $|B|_p$,
we have $Q\circ g=g+\lambda_h(P)$.

Set $H=g^{-1,\circ}\circ Q\circ g$. Then $H$ is a Sylow $p$-subgroup of
$(B,\circ)$. We now show that it is also a Sylow $p$-subgroup of
$(B,+)$. From $Q\circ g=g+\lambda_h(P)$ we get
$-g+Q\circ g=\lambda_h(P)$. On the other hand,
$-g+q\circ g=\lambda_g(g^{-1,\circ}\circ q\circ g)$ for every
$q\in Q$. Hence $\lambda_g(H)=\lambda_h(P)$, and therefore
$H=\lambda_g^{-1}\lambda_h(P)=\lambda_{g^{-1,\circ}\circ h}(P)$.
Thus $H$ is a Sylow $p$-subgroup of~$(B,+)$.

So $H$ is a subgroup of both $(B,+)$ and $(B,\circ)$, and therefore it
is a sub-skew brace of $B$. Finally, since $S\leq P$ and $S$ is a left
ideal, we have
$\lambda_{g^{-1,\circ}\circ h}(S)=S$. Hence
$S\leq \lambda_{g^{-1,\circ}\circ h}(P)=H$. Therefore $H$ is a Sylow
$p$-sub-skew brace of $B$ containing $S$.
\end{proof}

\begin{theo}\label{sylowtheorem2}
Let $B=(B,+,\circ)$ be a finite skew brace such that both groups
$(B,+)$ and $(B,\circ)$ are soluble. Let $\pi$ be a set of primes, and
let $S$ be a left ideal of $B$ whose order is a $\pi$-number. Then there
exists a Hall $\pi$-sub-skew brace $H$ of $B$ such that $S\leq H$.
\end{theo}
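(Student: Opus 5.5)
The plan is to mimic the proof of Theorem~\ref{sylowtheorem}, replacing Sylow's theorems by Hall's theorems for soluble groups. Put $G=(B,+)$ and use the embedding $\ell_\circ\colon(B,\circ)\to\operatorname{Hol}(G)$, $a\mapsto(a,\lambda_a)$, inside the subgroup $X=G\rtimes\lambda(B)$. As before, $S$ is a subgroup of $(B,\circ)$.

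First, since $(B,\circ)$ is soluble, every $\pi$-subgroup is contained in a Hall $\pi$-subgroup. So I choose a Hall $\pi$-subgroup $Q$ of $(B,\circ)$ with $S\leq Q$. The group $\lambda(B)$ is a quotient of $(B,\circ)$, so $\lambda(B)$ is soluble and $\lambda(Q)$ is a Hall $\pi$-subgroup of $\lambda(B)$. Since $G$ is soluble, $X$ is soluble too.

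The key step, and the one I expect to be the main obstacle, is to find a Hall $\pi$-subgroup $P$ of $G$ with $S\leq P$ and $\lambda_q(P)=P$ for all $q\in Q$. The counting argument used in Theorem~\ref{sylowtheorem} is not available, because the number of Hall subgroups need not be congruent to $1$. Instead I work in $Y=G\rtimes_\lambda Q$, which is soluble. The subgroup $S\rtimes Q$ is a $\pi$-subgroup of $Y$, since $S$ is $\lambda(Q)$-invariant. By Hall's theorem it lies in a Hall $\pi$-subgroup $R$ of $Y$. Because $\{0\}\rtimes Q\leq R$, the projection $R\to Q$ is onto, so $|R\cap G|=|R|/|Q|=|Y|_\pi/|Q|=|G|_\pi$. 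Hence $P=R\cap G$ is a Hall $\pi$-subgroup of $G$ containing $S$. It is normalized by $Q\leq R$, which means $\lambda_q(P)=P$ for all $q\in Q$. This is exactly the construction from the proof of Theorem~\ref{schurzassenhaus}; here solubility replaces Lemma~\ref{lem1}.

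Next, $P\rtimes\lambda(Q)$ is a subgroup of $X$ of order $|G|_\pi|\lambda(B)|_\pi=|X|_\pi$, so it is a Hall $\pi$-subgroup of $X$. The subgroup $\ell_\circ(Q)$ is a $\pi$-subgroup of the soluble group $X$. By Hall's theorem it lies in some Hall $\pi$-subgroup, and since Hall subgroups of $X$ are conjugate, it lies in a conjugate $(g,\lambda_h)(P\rtimes\lambda(Q))(g,\lambda_h)^{-1}$.

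Evaluating at $0$ gives $Q\circ g\subseteq g+\lambda_h(P)$, and the two sets have equal size $|B|_\pi$, so they coincide. Setting $H=g^{-1,\circ}\circ Q\circ g$, the same computation as before gives $H=\lambda_{g^{-1,\circ}\circ h}(P)$. Therefore $H$ is a Hall $\pi$-subgroup of both $(B,+)$ and $(B,\circ)$, hence a Hall $\pi$-sub-skew brace. Since $S$ is $\lambda$-invariant and $S\leq P$, we get $S\leq H$.
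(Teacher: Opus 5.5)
Your proposal is correct and follows the paper's proof essentially step by step: Hall's theorem for soluble groups replaces Sylow's theorems, an invariant Hall $\pi$-subgroup $P$ of $(B,+)$ is obtained as $R\cap G$ in a semidirect product, and the rest is the holomorph-conjugation argument with evaluation at $0$. The only cosmetic difference is that you work in $G\rtimes_\lambda Q$, as in the proof of Theorem~\ref{schurzassenhaus}, whereas the paper works in the subgroup $G\rtimes\lambda(Q)$ of $G\rtimes\lambda(B)$; this changes nothing.
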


\begin{proof}
Put $G=(B,+)$, and identify $\operatorname{Hol}(G)$ with
$G\rtimes \operatorname{Aut}(G)$. Thus
$(x,\alpha)(y,\beta)=(x+\alpha(y),\alpha\beta)$, and $(x,\alpha)$ acts
on $G$ by $(x,\alpha)[y]=x+\alpha(y)$.

For every $a\in B$, let $\lambda_a\in\operatorname{Aut}(G)$ be the
lambda map, $\lambda_a(b)=-a+a\circ b$. Then
$a\circ b=a+\lambda_a(b)$, and the map
$a\mapsto (a,\lambda_a)$ embeds $(B,\circ)$ into
$\operatorname{Hol}(G)$. We denote this embedding by $\ell_\circ$.

Since $S$ is a left ideal, $S$ is a subgroup of $(B,+)$ and
$\lambda_b(S)=S$ for every $b\in B$. In particular, $S$ is a subgroup
of $(B,\circ)$, because if $s,t\in S$, then
$s\circ t=s+\lambda_s(t)\in S$.

Since $(B,\circ)$ is soluble, Hall's theorem for finite soluble groups
gives a Hall $\pi$-subgroup $Q$ of $(B,\circ)$ such that $S\leq Q$.

We next choose a Hall $\pi$-subgroup of $(B,+)$ which contains $S$ and
is invariant under $\lambda(Q)$. Consider the subgroup
$G\rtimes\lambda(Q)$ of $G\rtimes\lambda(B)$. This group is soluble.
Moreover $S\rtimes\lambda(Q)$ is a $\pi$-subgroup of
$G\rtimes\lambda(Q)$, since $S$ is $\lambda(Q)$-invariant. Hence, by
Hall's theorem applied to $G\rtimes\lambda(Q)$, there exists a Hall
$\pi$-subgroup $M$ of $G\rtimes\lambda(Q)$ such that
$S\rtimes\lambda(Q)\leq M$.

Put $P=M\cap G$. Since $G\trianglelefteq G\rtimes\lambda(Q)$, the
subgroup $P$ is a Hall $\pi$-subgroup of $G$. Moreover $S\leq P$.
Since $\lambda(Q)\leq M$ and $P=M\cap G\trianglelefteq M$, we have
$\lambda_q(P)=P$ for every $q\in Q$. Thus
$P\rtimes\lambda(Q)$ is a subgroup of $G\rtimes\lambda(B)$.

We claim that $P\rtimes\lambda(Q)$ is a Hall $\pi$-subgroup of
$G\rtimes\lambda(B)$. Indeed, $P$ is a Hall $\pi$-subgroup of $G$, and
$\lambda(Q)$ is a Hall $\pi$-subgroup of $\lambda(B)$, because
$\lambda\colon (B,\circ)\to \lambda(B)$ is an epimorphism and $Q$ is a
Hall $\pi$-subgroup of $(B,\circ)$. Hence
\[
        |P\rtimes\lambda(Q)|
        =
        |P|\,|\lambda(Q)|
        =
        |G|_\pi\,|\lambda(B)|_\pi
        =
        |G\rtimes\lambda(B)|_\pi.
\]
Thus $P\rtimes\lambda(Q)$ is a Hall $\pi$-subgroup of
$G\rtimes\lambda(B)$.

The subgroup $\ell_\circ(Q)$ is a $\pi$-subgroup of
$G\rtimes\lambda(B)$. Since $G\rtimes\lambda(B)$ is soluble, Hall's
theorem implies that $\ell_\circ(Q)$ is contained in a conjugate of
$P\rtimes\lambda(Q)$. Hence there exist $g\in G$ and $h\in B$ such that
\[
        \ell_\circ(Q)
        \leq
        (g,\lambda_h)(P\rtimes\lambda(Q))(g,\lambda_h)^{-1}.
\]
Equivalently,
\[
        \ell_\circ(Q)(g,\lambda_h)
        \subseteq
        (g,\lambda_h)(P\rtimes\lambda(Q)).
\]

Applying both sides to the identity element $0$ of $(B,+)$, we get
$Q\circ g\subseteq g+\lambda_h(P)$. Indeed, for $q\in Q$,
$\ell_\circ(q)(g,\lambda_h)(0)=q\circ g$, while for $p\in P$ and
$q\in Q$, $(g,\lambda_h)(p,\lambda_q)(0)=g+\lambda_h(p)$. Since both
sets have order $|B|_\pi$, it follows that
\[
        Q\circ g=g+\lambda_h(P).
\]

Set $H=g^{-1,\circ}\circ Q\circ g$. Then $H$ is a Hall $\pi$-subgroup
of $(B,\circ)$. We show that $H$ is also a Hall $\pi$-subgroup of
$(B,+)$. From $Q\circ g=g+\lambda_h(P)$ we get
$-g+Q\circ g=\lambda_h(P)$. On the other hand, for every $q\in Q$,
\[
        -g+q\circ g
        =
        \lambda_g(g^{-1,\circ}\circ q\circ g).
\]
Therefore $\lambda_g(H)=\lambda_h(P)$, and hence
$H=\lambda_g^{-1}\lambda_h(P)=\lambda_{g^{-1,\circ}\circ h}(P)$. Since
$\lambda_{g^{-1,\circ}\circ h}$ is an automorphism of $(B,+)$ and $P$
is a Hall $\pi$-subgroup of $(B,+)$, it follows that $H$ is a Hall
$\pi$-subgroup of $(B,+)$.

Thus $H$ is a subgroup of both $(B,+)$ and $(B,\circ)$, and so $H$ is a
sub-skew brace of $B$. Finally, since $S\leq P$ and $S$ is a left ideal,
we have $\lambda_{g^{-1,\circ}\circ h}(S)=S$. Hence
$S\leq \lambda_{g^{-1,\circ}\circ h}(P)=H$. Therefore $H$ is a Hall
$\pi$-sub-skew brace of $B$ containing $S$.
\end{proof}

\medskip

Our next example shows that the Sylow sub-skew braces may even not be isomorphic even in the case of a soluble skew brace.

\begin{ex}\label{ex2}
{\rm Let $G=S_4$ and put $a=(1\,2)(3\,4)$. Define $\psi\colon G\to G$ by
\[
        \psi(g)=
        \begin{cases}
        1, & g\in A_4,\\
        a, & g\notin A_4.
        \end{cases}
\]
Then $\psi$ is an endomorphism of $G$, with abelian image
$\psi(G)=\langle a\rangle$. Hence we may define a skew brace
$B=(G,+,\circ)$ by taking $x+y=xy$ and
$x\circ y=x\psi(x^{-1})y\psi(x)$ for all $x,y\in G$. In this skew brace
$\lambda_x(y)=\psi(x^{-1})y\psi(x)$; hence $\lambda_x$ is the identity
if $x$ is even, and is conjugation by $a$ if $x$ is odd.

The skew brace $B$ is soluble. Indeed, $(B,+)\simeq S_4$ is soluble, and
$(B,\circ)$ embeds into $S_4\rtimes \lambda(B)$, where $\lambda(B)$ has
order $2$. Hence $(B,\circ)$ is soluble as well.

Let
\[
        H_1=\langle (1\,2),\ (3\,4),\ (1\,3)(2\,4)\rangle,
        \qquad
        H_2=\langle (1\,2\,3\,4),\ (1\,2)(3\,4)\rangle.
\]
Then $H_1=C_{S_4}(a)$, so $|H_1|=8$. Also $H_2$ has order $8$: if
$r=(1\,2\,3\,4)$, then $H_2=\langle r,a\rangle$, with $r^4=a^2=1$ and
$ara=r^{-1}$. Thus both $H_1$ and $H_2$ are Sylow $2$-subgroups of
$(B,+)$.

Both $H_1$ and $H_2$ contain $a$. Therefore conjugation by $a$ stabilises
both of them. Since every $\lambda_h$, with $h\in H_i$, is either the
identity or conjugation by $a$, it follows that $\lambda_h(H_i)=H_i$ for
all $h\in H_i$. Hence $H_1$ and $H_2$ are sub-skew braces of $B$, and so
they are Sylow $2$-sub-skew braces.

We claim that $H_1$ and $H_2$ are not isomorphic as skew braces. Since
$H_1=C_{S_4}(a)$, the element $a$ centralises $H_1$. Hence, for every
$h\in H_1$, the restriction of $\lambda_h$ to $H_1$ is the identity.
Indeed, if $h$ is even this is clear, while if $h$ is odd then
$\lambda_h$ is conjugation by $a$, which is trivial on $H_1$. Therefore
$h\circ y=h+\lambda_h(y)=h+y$ for all $h,y\in H_1$, and $H_1$ is a
trivial skew brace.

On the other hand, $H_2$ is not trivial. Indeed, with $r=(1\,2\,3\,4)$,
we have $r\in H_2$ and $r$ is odd. Hence $\lambda_r$ is conjugation by
$a$, and so $\lambda_r(r)=ara=r^{-1}\neq r$. Therefore
$r\circ r=r+\lambda_r(r)=rr^{-1}=1$, whereas $r+r=r^2\neq 1$. Thus the
two operations do not coincide on $H_2$.

Since being trivial is preserved under skew brace isomorphisms, $H_1$
and $H_2$ are not isomorphic as skew braces. Consequently, even for a
finite skew brace whose additive and multiplicative groups are soluble,
Sylow sub-skew braces need not be isomorphic.}
\end{ex}

Finally, we prove the Sylow's Third Theorem for finite skew braces.

\begin{theo}\label{thirdsylow}
Let $B=(B,+,\circ)$ be a finite skew brace, let $p$ be a prime, and let
$n_p(B)$ denote the number of Sylow $p$-sub-skew braces of $B$. Then
$n_p(B)\equiv 1 \pmod p$.
\end{theo}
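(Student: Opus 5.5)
The plan is to imitate Frobenius's proof of Sylow's Third Theorem: make a $p$-group act on the set $\Sigma$ of Sylow $p$-sub-skew braces of $B$, and show that the action has exactly one fixed point. Then every other orbit has length divisible by $p$, and $|\Sigma|\equiv 1\pmod p$. By Theorem~\ref{sylowtheorem} with $S=\{0\}$, the set $\Sigma$ is non-empty. Fix $K\in\Sigma$; the group that acts will be built from $K$.

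The difficulty is to find an action of a $p$-group on $\Sigma$ by maps sending sub-skew braces to sub-skew braces. Neither naive choice works for a general skew brace: the lambda maps $\lambda_k$ preserve $+$ but not $\circ$, and conjugation in $(B,\circ)$ does not obviously preserve additive subgroups. I would therefore work in $X=G\rtimes\lambda(B)\le\operatorname{Hol}(G)$, with $G=(B,+)$, as in the proof of Theorem~\ref{sylowtheorem}.

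\textbf{Encoding step.} Characterise each $H\in\Sigma$ by data in $X$. For a subgroup $H$ of $(B,\circ)$ that is also a subgroup of $(B,+)$, one has $h\circ H=h+\lambda_h(H)$. Hence $\lambda_h(H)=H$ for all $h\in H$, so $\ell_\circ(H)$ stabilises $H$ setwise under the natural action of $X$ on $B$. I would attach to $H$ the subgroup $\widehat H=\langle H\rtimes 1,\ \ell_\circ(H)\rangle=H\rtimes\lambda(H)$ of $X$. I would then show that $H$ is recovered from $\widehat H$ as its orbit of $0$, and as $\widehat H\cap G$. The aim is to prove that $H\mapsto \widehat H$ is injective, and to identify its image as the $p$-subgroups $R$ of $X$ that satisfy two conditions: $|R\cap G|=|B|_p$, and $R$ contains a subgroup $\ell_\circ(Q)$, with $Q$ a Sylow subgroup of $(B,\circ)$, whose orbit of $0$ is $R\cap G$.

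\textbf{Action and fixed points.} Let $\ell_\circ(K)$ act by conjugation on $p$-subgroups of $X$. I would check that this conjugation preserves the image described above; this is the \emph{main obstacle}. Conjugating by $\ell_\circ(k)=(k,\lambda_k)$ sends the orbit of $0$ of $\widehat H$ to $k\circ(\text{orbit of } 0\text{ under a } {\circ}\text{-conjugate})$, so I expect the correct induced map on $\Sigma$ to be
\[
H\longmapsto k\circ H\circ k^{-1,\circ}.
\]
If this is right, the key identity to verify is $k\circ H\circ k^{-1,\circ}=\lambda_{u}(H')$ for suitable $u$ and an additive Sylow $H'$. This is the same computation as $-g+q\circ g=\lambda_g(g^{-1,\circ}\circ q\circ g)$ used in Theorem~\ref{sylowtheorem}, so I would try first to adapt that calculation. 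Once the action is in place, the fixed points are the $H$ with $\ell_\circ(K)$ normalising $\widehat H$. Then $\ell_\circ(K)\widehat H$ is a $p$-subgroup, and maximality of orders should force $K=H$, exactly as in the argument for $|\Omega|\equiv1\pmod p$ in the proof of Theorem~\ref{sylowtheorem}.

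\textbf{Fallback.} If the conjugation action fails to preserve $\Sigma$, I would count incidences instead. I would use pairs $(Q,P)$ as in Theorem~\ref{sylowtheorem}, with $Q$ a Sylow $p$-subgroup of $(B,\circ)$ and $P$ a $\lambda(Q)$-invariant Sylow $p$-subgroup of $(B,+)$. The set of such $P$ for a fixed $Q$ is $\equiv1\pmod p$ by the fixed-point argument in that proof. I would then try to show that the fibres of the construction $(Q,P)\mapsto H$ are compatible with reduction modulo $p$.
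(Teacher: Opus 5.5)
\textbf{Verdict.} There is a genuine gap: the main route fails at the step you flag, and the fallback omits the idea that makes the count work.

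\textbf{The main route.} The trouble is not technical: $\circ$-conjugation by elements of a Sylow sub-skew brace $K$ does not map $\Sigma$ to itself, so $K$ has no action on $\Sigma$ whose fixed points you could count. In general $k\circ H\circ k^{-1}_\circ=\lambda_k\bigl(-k^{-1}_\circ+H\circ k^{-1}_\circ\bigr)$. This is an additive subgroup only when the right $\circ$-coset $H\circ k^{-1}_\circ$ happens to be a left additive coset, and nothing forces that.

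Here is a concrete counterexample. Use the construction of Example~\ref{ex2} with $G=S_5$, setting $\psi(x)=(1\,2)$ for odd $x$ and $\psi(x)=1$ for even $x$. Then each $\lambda_x$ is either the identity or conjugation by $(1\,2)$. Take $K=\langle(1\,2),(3\,4),(1\,3)(2\,4)\rangle$ and $H=\langle(1\,2),(3\,5),(1\,3)(2\,5)\rangle$. Both are Sylow $2$-subgroups of $S_5$ containing $(1\,2)$, hence $\lambda$-stable, hence Sylow $2$-sub-skew braces. Let $k=(1\,3)(2\,4)\in K$, which is even, so $k^{-1}_\circ=k$, and let $h=(3\,5)\in H$. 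Composing right to left,
$k\circ h\circ k^{-1}_\circ=k\,h\,(1\,2)\,k\,(1\,2)=(1\,2\,5)(3\,4)$.
This element has additive order $6$, so $k\circ H\circ k^{-1}_\circ$ is not even an additive subgroup.

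Your encoding in $X$ fails on the same example. We have $\ell_\circ(k)\widehat H\ell_\circ(k)^{-1}\cap G=k+\lambda_k(H)-k$, which here is the ordinary conjugate $kHk^{-1}$. That subgroup fixes the point $2$, so it does not contain $(1\,2)$. Hence the conjugate of $\widehat H$ is not of the form $\widehat{H'}$ with $H'\in\Sigma$, and the identity $k\circ H\circ k^{-1}_\circ=\lambda_u(H')$ you hoped to verify is false.

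\textbf{The fallback.} It is the right starting point, but it stops where the argument begins. A pair $(P,Q)$ with $\lambda_q(P)=P$ for all $q\in Q$ does not by itself determine a sub-skew brace. You also need a left coset $g+P$ fixed by $Q$ under $q\cdot(g+P)=q\circ g+P$; this action is well defined precisely because $\lambda_q(P)=P$. Once you have such a coset, $Q\circ g=g+P$ and $H=g^{-1}_\circ\circ Q\circ g$ is a Sylow sub-skew brace.

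The paper counts triples $(P,Q,g+P)$ as follows.
\begin{itemize}
\item For each fixed Sylow $p$-subgroup $Q$ of $(B,\circ)$, the number of admissible $P$ is $\equiv1\pmod p$. Summing over the $Q$, whose number is also $\equiv1$, gives that the set $\mathcal C$ of admissible pairs satisfies $|\mathcal C|\equiv1\pmod p$.
\item For each pair, the number of $Q$-fixed cosets is $\equiv m=|B|/|B|_p\pmod p$. So the number of triples is $\equiv m$.
\item Conversely, each $H\in\Sigma$ arises from exactly $m$ triples, namely $(\lambda_g(H),\,g\circ H\circ g^{-1}_\circ,\,g+\lambda_g(H))$, with $g$ running over the left cosets of $H$ in $(B,\circ)$. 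This uses $g\circ H=g+\lambda_g(H)$.
\end{itemize}
Hence $m\,n_p(B)\equiv m$, and $p\nmid m$ gives $n_p(B)\equiv 1\pmod p$. Without the coset datum and this exact fibre count, ``fibres compatible with reduction modulo $p$'' has no content.
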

\begin{proof}
Put $G=(B,+)$, and write $|B|_p=p^a$ and $m=|B|/p^a$. Let
$\mathcal P$ be the set of Sylow $p$-subgroups of $(B,+)$, and let
$\mathcal Q$ be the set of Sylow $p$-subgroups of $(B,\circ)$. Consider
the set
\[
        \mathcal C=\{(P,Q)\in\mathcal P\times\mathcal Q:
        \lambda_q(P)=P \text{ for every } q\in Q\}.
\]
We first prove that $|\mathcal C|\equiv 1\pmod p$. Fix
$Q\in\mathcal Q$. Since $Q$ is a $p$-group, it acts on $\mathcal P$ by
$P\mapsto\lambda_q(P)$. The fixed points of this action are precisely
the Sylow $p$-subgroups $P$ of $(B,+)$ such that $\lambda_q(P)=P$ for
every $q\in Q$. Since a $p$-group acting on a finite set has number of
fixed points congruent modulo $p$ to the cardinality of the whole set,
and since $|\mathcal P|\equiv 1\pmod p$ by the ordinary Sylow theorem,
the number of such $P$ is congruent to $1$ modulo $p$. Summing over all
$Q\in\mathcal Q$, and using $|\mathcal Q|\equiv 1\pmod p$, we get
$|\mathcal C|\equiv 1\pmod p$.

Now fix $(P,Q)\in\mathcal C$. The group $Q$ acts on the set of left
cosets $G/P$ by
\[
        q\cdot(g+P)=q\circ g+P .
\]
This is well-defined. Indeed, if $g'=g+x$ with $x\in P$, then
\[
        q\circ g'
        =
        q+\lambda_q(g+x)
        =
        q+\lambda_q(g)+\lambda_q(x)
        =
        q\circ g+\lambda_q(x),
\]
and $\lambda_q(x)\in P$. Thus $q\circ g'+P=q\circ g+P$. Since
$|G/P|=m$ is prime to $p$, the number of fixed points of the action of
$Q$ on $G/P$ is congruent to $m$ modulo $p$.

Let $\mathcal T$ be the set of triples $(P,Q,g+P)$ such that
$(P,Q)\in\mathcal C$ and $g+P$ is fixed by $Q$ under the action above.
By the previous paragraph,
\[
        |\mathcal T|\equiv m|\mathcal C|\equiv m \pmod p.
\]

We now count $\mathcal T$ in a second way. Let $(P,Q,g+P)\in\mathcal T$.
Since $g+P$ is fixed by $Q$, we have $q\circ g\in g+P$ for every
$q\in Q$. Hence $Q\circ g\subseteq g+P$. Both sets have cardinality
$p^a$, so $Q\circ g=g+P$.

Define
\[
        H=g^{-1}_\circ\circ Q\circ g.
\]
This is well-defined from the triple, not from the chosen representative
$g$. Indeed, if $g'\in g+P$, then, since $Q\circ g=g+P$, there exists
$q_0\in Q$ such that $g'=q_0\circ g$. Therefore
\[
        g_\circ'^{-1}\circ Q\circ g'
        =
        g_\circ^{-1}\circ q_{0,\circ}^{-1}\circ Q\circ q_0\circ g
        =
        g_\circ^{-1}\circ Q\circ g.
\]
Thus $H$ depends only on $(P,Q,g+P)$.

Clearly $H$ is a Sylow $p$-subgroup of $(B,\circ)$. Moreover, for every
$q\in Q$ one has
\[
        \lambda_g(g^{-1}_\circ\circ q\circ g)
        =
        -g+q\circ g.
\]
Thus
\[
        \lambda_g(H)=-g+Q\circ g=-g+(g+P)=P.
\]
Since $\lambda_g$ is an automorphism of $(B,+)$, it follows that $H$ is
a Sylow $p$-subgroup of $(B,+)$. Hence $H$ is a subgroup of both
$(B,+)$ and $(B,\circ)$. Therefore $H$ is a sub-skew brace of $B$, since
for $h,k\in H$ we have $\lambda_h(k)=-h+h\circ k\in H$. Thus $H$ is a
Sylow $p$-sub-skew brace of $B$.

Conversely, let $H$ be a Sylow $p$-sub-skew brace of $B$ and let
$g\in B$. Put
\[
        P_g=\lambda_g(H),\qquad
        Q_g=g\circ H\circ g^{-1}_\circ,\qquad
        \omega_g=g+P_g.
\]
Then $P_g\in\mathcal P$ and $Q_g\in\mathcal Q$. If
$q=g\circ h\circ g^{-1}_\circ$ with $h\in H$, then
\[
        \lambda_q(P_g)
        =
        \lambda_q\lambda_g(H)
        =
        \lambda_{q\circ g}(H)
        =
        \lambda_{g\circ h}(H)
        =
        \lambda_g\lambda_h(H)
        =
        \lambda_g(H)
        =
        P_g,
\]
because $H$ is a sub-skew brace and hence $\lambda_h(H)=H$. Therefore
$(P_g,Q_g)\in\mathcal C$. Moreover
$q\circ g=g\circ h\in g\circ H=g+\lambda_g(H)=g+P_g$, so $\omega_g$ is
fixed by $Q_g$. Hence $(P_g,Q_g,\omega_g)\in\mathcal T$.

We claim that, for a fixed Sylow $p$-sub-skew brace $H$, the triples
obtained in this way are parametrised by the left cosets of $H$ in
$(B,\circ)$. If $h\in H$, then
\[
        P_{g\circ h}
        =
        \lambda_{g\circ h}(H)
        =
        \lambda_g\lambda_h(H)
        =
        P_g,
\]
and
\[
        Q_{g\circ h}
        =
        g\circ h\circ H\circ h^{-1}_\circ\circ g^{-1}_\circ
        =
        g\circ H\circ g^{-1}_\circ
        =
        Q_g.
\]
Moreover $(g\circ h)+P_g=g+\lambda_g(h)+P_g=g+P_g$, because
$\lambda_g(h)\in P_g$. Hence $g$ and $g\circ h$ give the same triple.

Conversely, suppose that $g_1$ and $g_2$ give the same triple. Then
$P_{g_1}=P_{g_2}$ and $g_1+P_{g_1}=g_2+P_{g_2}$. Hence
$g_2\in g_1+P_{g_1}$. Since $P_{g_1}=\lambda_{g_1}(H)$, we have
\[
        g_1+P_{g_1}
        =
        g_1+\lambda_{g_1}(H)
        =
        g_1\circ H.
\]
Thus $g_2\in g_1\circ H$, so $g_1$ and $g_2$ lie in the same left coset
of $H$ in $(B,\circ)$.

Finally, every triple in $\mathcal T$ arises from the Sylow sub-skew
brace $H=g^{-1}_\circ\circ Q\circ g$ constructed above, and the
preceding paragraph shows that, for each fixed $H$, exactly
$|(B,\circ):H|=m$ triples arise. Therefore
\[
        |\mathcal T|=m\,n_p(B).
\]
Since also $|\mathcal T|\equiv m\pmod p$ and $p\nmid m$, we conclude
that $n_p(B)\equiv 1\pmod p$.
\end{proof}

\medskip

Usually, in group theory, Sylow's Third Theorem is used in combination with the fact that the number of Sylow subgroups divides the order of the finite group. In the context of skew braces this is not possible.

\begin{ex}\label{ex:number-not-dividing}
{\rm  Let \(H=\operatorname{GL}(3,2)\), let \(C_2=\langle c\rangle\), and put
\(G=H\times C_2\). Since
\(|H|=(2^3-1)(2^3-2)(2^3-2^2)=7\cdot 6\cdot 4=168\), we have
\(|G|=336\). Fix an involution \(a\in H\), and put \(a_0=(a,1)\in G\).
Define \(\psi\colon G\to G\) by \(\psi(h,c^\varepsilon)=a_0^\varepsilon\),
for \(h\in H\) and \(\varepsilon\in\{0,1\}\). Then \(\psi\) is an
endomorphism of \(G\), and \(\psi(G)=\langle a_0\rangle\) is abelian.
Therefore the standard construction gives a skew brace structure on
\(B=G\), namely
\[
        x+y=xy,\qquad
        x\circ y=x\psi(x^{-1})y\psi(x)
        \qquad (x,y\in G).
\]
Indeed, the associated maps are given by
\(\lambda_x(y)=\psi(x^{-1})y\psi(x)\), and the fact that \(\psi(G)\) is
abelian gives \(\lambda_{x\circ y}=\lambda_x\lambda_y\).

We claim that \(B\) has exactly \(5\) Sylow \(2\)-sub-skew braces. Let
\(P\) be a Sylow \(2\)-subgroup of the additive group \((B,+)=G\). Since
the factor \(C_2\) is a normal \(2\)-subgroup of \(G\), every Sylow
\(2\)-subgroup of \(G\) contains it. Hence \(P=S\times C_2\), where
\(S\in\operatorname{Syl}_2(H)\).

Now \(P\) is already a subgroup of \((B,+)\), so it is a sub-skew brace
if and only if it is closed under \(\circ\). For \(x,y\in P\), the
condition \(x\circ y\in P\) is equivalent to
\(\psi(x^{-1})y\psi(x)\in P\). Thus \(P\) is a sub-skew brace if and
only if \(\psi(P)\leq N_G(P)\). Since \(P=S\times C_2\), we have
\(\psi(P)=\langle a_0\rangle\). Therefore
\[
        P \text{ is a Sylow }2\text{-sub-skew brace}
        \Longleftrightarrow
        a_0\in N_G(P)
        \Longleftrightarrow
        aSa^{-1}=S
        \Longleftrightarrow
        a\in S .
\]
The last equivalence follows because \(a\) has order \(2\): if \(a\)
normalizes \(S\), then \(\langle S,a\rangle\) is a \(2\)-subgroup of
\(H\), hence \(\langle S,a\rangle=S\) by maximality of \(S\).

It remains to count the Sylow \(2\)-subgroups of \(H\) which contain the
fixed involution \(a\). Since \(|H|=168=2^3\cdot 3\cdot 7\), a Sylow
\(2\)-subgroup of \(H\) has order \(8\). The upper unitriangular group
\(UT_3(2)\) is such a Sylow \(2\)-subgroup. It is self-normalizing in
\(H\): its normalizer is the upper triangular subgroup, which coincides
with \(UT_3(2)\) because \(\mathbb F_2^\times=\{1\}\). Hence the number
of Sylow \(2\)-subgroups of \(H\) is \(168/8=21\).

Each Sylow \(2\)-subgroup contains exactly \(5\) involutions. Indeed,
\(UT_3(2)\) consists of the matrices
\(I+\alpha E_{12}+\beta E_{13}+\gamma E_{23}\), with
\(\alpha,\beta,\gamma\in\mathbb F_2\). Squaring gives
\((I+\alpha E_{12}+\beta E_{13}+\gamma E_{23})^2
=I+\alpha\gamma E_{13}\). Thus a non-identity element has order \(2\)
precisely when \(\alpha\gamma=0\). Among the \(8\) triples
\((\alpha,\beta,\gamma)\), there are \(6\) with \(\alpha\gamma=0\);
one of them gives the identity, so there are \(5\) involutions.

The group \(H\) has exactly \(21\) involutions. Indeed, in characteristic
\(2\), an involution is of the form \(I+N\), with \(0\neq N\) and
\(N^2=0\). In dimension \(3\), this forces \(\operatorname{rank}(N)=1\),
so \(N=u\otimes f\), where \(0\neq u\in\mathbb F_2^3\),
\(0\neq f\in(\mathbb F_2^3)^*\), and \(f(u)=0\). There are \(7\) choices
for \(u\), and for each fixed \(u\) the annihilator of \(u\) in
\((\mathbb F_2^3)^*\) has dimension \(2\), hence contains \(3\) nonzero
linear forms. Since there is no nontrivial scalar ambiguity over
\(\mathbb F_2\), this gives \(7\cdot 3=21\) involutions.

Now count explicitly the pairs \((S,t)\), where
\(S\in\operatorname{Syl}_2(H)\) and \(t\) is an involution of \(S\).
Counting first by \(S\), there are \(21\) choices for \(S\), and each
such \(S\) contains \(5\) involutions, so the number of pairs is
\(21\cdot 5=105\). Counting first by \(t\), there are \(21\) involutions
in \(H\). Since all involutions of \(H\) are conjugate, each involution
lies in the same number, say \(r\), of Sylow \(2\)-subgroups. Thus the
same number of pairs is \(21r\). Therefore \(21r=105\), so \(r=5\).
Thus the fixed involution \(a\) belongs to exactly \(5\) Sylow
\(2\)-subgroups of \(H\). By the criterion above, \(B\) has exactly
\(5\) Sylow \(2\)-sub-skew braces. Consequently
\[
        n_2(B)=5,\qquad |B|=336,\qquad 5\nmid 336.
\]
Therefore the number of Sylow \(2\)-sub-skew braces of \(B\) does not
divide the order of \(B\).}
\end{ex}

\bigskip\bigskip\bigskip

\renewcommand{\bibsection}{\begin{flushright}\Large
{
REFERENCES}\\
\rule{8cm}{0.4pt}\\[0.8cm]
\end{flushright}}

\bigskip\bigskip


\begin{flushleft}
\rule{8cm}{0.4pt}\\
\end{flushleft}

{
\sloppy
\noindent
Maria Ferrara

\noindent
Dipartimento di Ingegneria

\noindent
Facoltà di Ingegneria e Informatica

\noindent
Università Pegaso

\noindent
e-mail: maria.ferrara1@unipegaso.it
}

\bigskip
\bigskip

{
\sloppy
\noindent
Marco Trombetti

\noindent 
Dipartimento di Matematica e Applicazioni ``Renato Caccioppoli''

\noindent
Università degli Studi di Napoli Federico II

\noindent
Complesso Universitario Monte S. Angelo

\noindent
Via Cintia, Napoli (Italy)

\noindent
e-mail: marco.trombetti@unina.it 

}



\begin{thebibliography}{99} 


\baselineskip 10pt
{

\bibitem{BallesterPerezPerez26}
{\ssc A. Ballester-Bolinches -- P. Pérez-Altarriba -- V. Pérez-Calabuig}:
``On finite trifactorised groups and Sylow and Hall theorems for skew braces'',
ArXiv:2606.24977 (2026).

\bibitem{CarantiDelCorsoDiMatteoFerraraTrombetti25}
{\ssc A. Caranti -- I. Del Corso -- M. Di Matteo -- M. Ferrara -- M. Trombetti}:
``On the Sylow Theorem for Skew Braces'', ArXiv:2506.00940.

\bibitem{ErcanGulGulogluKizmaz26}
{\ssc G. Ercan -- S. Gül -- I.S. Güloğlu -- M.Y. Kızmaz}:
``Sylow theory and the nilpotency class of left nilpotent skew braces'',
ArXiv:2606.25691.

\bibitem{Huppert}{\ssc B. Huppert}: ``Finite Groups I'', {\it Springer}, Cham (2025).

\bibitem{RatheeYadav26}
{\ssc N. Rathee -- M.K. Yadav}:
``Skew brace extensions, second cohomology and complements'',
ArXiv:2601.12371 (2026).

\bibitem{Rob95} {\ssc D.J.S. Robinson}: ``A Course in the Theory of Groups'', {\it Springer}, New York (1996).


\bibitem{Truman26}{\ssc P.J. Truman}: ``Analogues of Sylow's first theorem, Cauchy's theorem, and Hall's theorem for skew braces'', ArXiv:2606.18414.


}
\end{thebibliography}
\end{document}